 \newtheorem{thm}{Theorem}[section]
 \newtheorem{cor}[thm]{Corollary}
 \newtheorem{prop}[thm]{Proposition}
 \theoremstyle{definition}
 \newtheorem*{rem}{Remark}
\newenvironment{enum}{\parindent0pt%
\begin{list}{}{%
\setlength{\itemindent}{0ex}
\setlength{\labelwidth}{15pt}
\setlength{\labelsep}{6pt}
\setlength{\leftmargin}{21pt}
\setlength{\listparindent}{0pt}
\setlength{\itemsep}{0ex}
\setlength{\topsep}{0ex}
\setlength{\parsep}{0.2em} %\renewcommand{\labelwidth}{1em}
}
}{\end{list}}
\newcommand {\fbG}{\frak{b}G}
\newcommand {\btG}{\boldsymbol{\beta}G}
\newcommand {\btS}{\boldsymbol{\beta}S}
\newcommand {\btX}{\boldsymbol{\beta}X}
\newcommand {\dG}{\widehat{G}}
\newcommand {\ddG}{\widehat{\!\widehat{G}}}
\newcommand {\dGd}{\widehat{G}_{\operatorname{d}}}
\newcommand {\ddGd}{\widehat{\widehat{G}_{\operatorname{d}}}}
\newcommand {\lv}{\left|}
\newcommand {\rv}{\right|}
\newcommand {\sbs}{\subseteq}
\newcommand {\mto} {\mapsto}
\newcommand {\cx}{\times}
\newcommand {\co}{\circ}
\newcommand {\cd}{\cdot}
\newcommand {\wtl}{\widetilde}
\newcommand {\imp}{\,\Rightarrow\,}
\newcommand {\Iff}{\ \Leftrightarrow\ }
\newcommand {\Eq}{\operatorname{Eq}}
\newcommand {\eps}{\varepsilon}
\newcommand {\vth}{\vartheta}
\newcommand {\Th}{\varTheta}
\newcommand {\vXi}{\varXi}
\newcommand {\vFi}{\varPhi}
\newcommand {\btN}{\boldsymbol{\beta}{\mathbb N}}
\newcommand {\bZ}{\mathbb Z}
\newcommand {\btZ}{\boldsymbol{\beta}{\mathbb Z}}
\newcommand {\bR}{\mathbb R}
\newcommand {\bC}{\mathbb C}
\newcommand {\bT}{\mathbb T}
\renewcommand{\:}{\colon}
\begin{document}

\title[Bohr and Stone-\v{C}ech  compactifications of abelian groups]
{The Bohr compactification of an abelian group as a quotient
of its Stone-\v{C}ech  compactification}

\author[P.~Zlato\v{s}]%
{Pavol Zlato\v{s}}

\address{%
{Pavol Zlato\v{s}}
\newline\indent
{\sl Faculty of Mathematics, Physics and Informatics}
\newline\indent
{\sl Comenius University}
\newline\indent
{\sl Mlynsk\'a dolina}
\newline\indent
{\sl 842\,48~Bratislava}
\newline\indent
{\sl Slovakia}
\newline\indent
{\tt zlatos@fmph.uniba.sk}}

\keywords{Stone-\v{C}ech compactification, Bohr compactification,
abelian group, right topological semigroup, topological group,
idempotent ultrafilter, Schur ultrafilter, closed congruence relation,
quotient}

\subjclass[2010]{Primary 22A15, 22C05; Secondary 43A40, 43A60, 54H11}

\thanks{Author's research supported by the grant no.~1/0333/17
of the Slovak grant agency VEGA}

\begin{abstract}
We will prove that, for any abelian group $G$, the canonical (surjective and
continuous) mapping $\btG \to \fbG$ from the Stone-\v{C}ech compactification
$\btG$ of $G$ to its Bohr compactfication $\fbG$ is a homomorphism with respect
to the semigroup operation on $\btG$, extending the multiplication on $G$, and
the group operation on $\fbG$. Moreover, the Bohr compactification $\fbG$ is
canonically isomorphic (both in algebraic and topological sense) to the quotient
of $\btG$ with respect to the least closed congruence relation on $\btG$ merging
all the \textit{Schur ultrafilters} on $G$ into the unit of \,$G$.
\end{abstract}

\maketitle

%\section{Introduction}\label{1}

\noindent
For any (discrete) semigroup $S$, its Stone-\v{C}ech compactification $\btS$
admits a semigroup operation extending the original multiplication on $S$ and
turning it into the universal compact right topological semigroup densely
extending $S$. The semigroups $\btS$ have proved their usefulness, versatility
and importance in various branches of mathematics, mainly in combinatorial
number theory and topological dynamics. In particular, the algebraic and
topological structure of the semigroups $\btN$ and $\btZ$ has been spectacularly
applied in proving a handful of striking combinatorial results in number theory.
The reader is referred to the monograph Hindman, Strauss \cite{HS} for a more
complete account.

Similarly, the Bohr compactification $\fbG$ of a locally compact abelian group $G$
is the universal compact abelian group densely extending $G$. It is of crucial
importance in harmonic analysis, mainly as the tool enabling to treat the almost
periodic functions on $G$ through their (continuous) extensions to $\fbG$. For
more details see, e.g., Hewitt, Ross \cite{HR1}, \cite{HR2}.

In the present paper we will bring to focus the relation between the Stone-\v{C}ech
and the Bohr compactification of any (discrete) abelian group $G$. Since the
Stone-\v{C}ech compactification $\btG$ is ``more universal'' than the Bohr
compactification $\fbG$, the embedding $G \to \fbG$ induces a canonical surjective
and continuous mapping $\xi\:\btG \to \fbG$. We will show that $\xi$ is a
homomorphism with respect to the semigroup operation on $\btG$, extending the
multiplication on $G$, and the group operation on $\fbG$. Then the set of all
pairs $(u,v) \in \btG \cx \btG$ such that $\xi(u) = \xi(v)$ is a closed congruence
relation on $\btG$. We will explicitly describe this relation as the least closed
congruence relation $\vXi(G)$ on $\btG$ merging all the \textit{Schur ultrafilters}
on $G$ (a~notion to be defined later on) into the unit of $G$. Thus the Bohr
compactification $\fbG$ is canonically isomorphic (both in algebraic and
topological sense) to the quotient $\btG/\vXi(G)$. As an intermediate result we
will show that, for any compact right topological semigroup $S$, the quotient
$S/\Th(S)$ of $S$ with respect to the least closed congruence relation $\Th(S)$
on $S$ containing all the pairs $(eu,u)$ where $e,u \in S$ and $e$ is an idempotent,
is a right topological \textit{group} with certain universal property.

In \cite{HS}, Chapter~21, various compactifications of semigroups $S$ were studied
and characterized as some particular quotients of the compact semigroups $\btS$.
However, the corresponding equivalence relations were always described in terms of
certain families of continuous functions from  $\btS$ to $\bR$ or $\bC$, obtained
as extensions of various kinds of ``almost periodic'' functions defined on~$S$.
In our approach, the corresponding congruence relations $\Th(\btG)$ and $\vXi(G)$
are described in terms of the inner algebraic and topological structure of the
Stone-\v{C}ech compactification $\btS$.

Analogous questions make sense within a more general framework: at least for
the class of locally compact abelian groups or even for more general classes
of topological groups. The former case is intended as a subject of our further
study \cite{Zlat}.

\section{Right topological semigroups and groups}\label{1}

\noindent
The reader is assumed to be familiar with the basic notions and results of general
topology. All the unexplained notions can be found, e.g., in Engelking~\cite{Eng}.
In what follows we will tacitly assume that all the topological spaces dealt
with are hausdorff. As a consequence, passing to a quotient $X/E$ of such a
topological space $X$, we will have to guarantee that the corresponding equivalence
relation, regarded as a subset $E \sbs X \cx X$, is closed.

In a semigroup $(S,\cd)$, the \textit{left shifts} $L_a\:S \to S$ and the
\textit{right shifts} $R_a\:S \to S$ are defined by $L_a(x) = ax$, $R_a(x) = xa$,
respectively, for $a,x \in S$. A semigroup $(S,\cd)$ endowed with a topology
$\tau$ is called a \textit{right topological semigroup} provided all the
right shifts $R_a\:S \to S$ are continuous. Using left shifts, the concept of a
\textit{left topological semigroup} can be defined analogously. A right
topological semigroup which is (algebraically) a group is called a \textit{right
topological group}, and similarly for left topological groups. If $(S,\cd,\tau)$
is both a left and right topological semigroup then we say that it is a
\textit{semitopological semigroup}. A \textit{topological semigroup} $(S,\cd,\tau)$
is a semigroup such that the multiplication $\cd : S \cx S \to S$ is (jointly)
continuous. A \textit{topological group} $(G,\cd,\tau)$ is a group which is a
topological semigroup such that the inverse map $J\:G \to G$, where $J(x) = x^{-1}$,
is continuous, as well (cf. Hindman, Strauss \cite{HS}).

Depending on context, we will omit the multiplication sign $\cdot$ or the symbol
of the topology $\tau$ or both in the notation of a (left or right) topological
semigroup $(S,\cd,\tau)$.

In what follows we will heavily depend on the following two results due to Ellis
(see, e.g., \cite{HS}, Theorem~2.5 and Corollary~2.39, respectively).

\begin{prop}\label{EllisIdempot}
Let $(S,\cd,\tau)$ be a compact right topological semigroup. Then $S$ contains
at least one idempotent element.
\end{prop}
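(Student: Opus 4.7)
The plan is to apply a Zorn's lemma argument to produce a minimal nonempty closed subsemigroup, and then show that such a minimal subsemigroup forces the existence of an idempotent. Right continuity (continuity of each $R_a$) is exactly enough for the two closedness steps below; joint continuity of multiplication is not needed.

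First I would consider the family $\mathcal{F}$ of all nonempty closed subsemigroups of $S$, partially ordered by reverse inclusion. For any chain $\mathcal{C} \subseteq \mathcal{F}$, the intersection $\bigcap \mathcal{C}$ is closed and, by compactness of $S$ together with the finite intersection property, nonempty; it is also trivially closed under multiplication, hence lies in $\mathcal{F}$ and is an upper bound for $\mathcal{C}$. Zorn's lemma then yields a minimal element $T \in \mathcal{F}$, i.e.\ a nonempty closed subsemigroup of $S$ that contains no proper nonempty closed subsemigroup.

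Next, fix any $u \in T$ and consider $Tu = R_u(T)$. Since $R_u$ is continuous and $T$ is compact, $Tu$ is compact, hence closed. It is contained in $T$ and it is a subsemigroup: for $t, t' \in T$ one has $(tu)(t'u) = \bigl(t(ut')\bigr)u \in Tu$ because $ut' \in T$. By minimality of $T$, therefore, $Tu = T$. In particular, there exists some $v \in T$ with $vu = u$, so the set
\[
E = \{\, v \in T : vu = u \,\} = T \cap R_u^{-1}(\{u\})
\]
is nonempty. It is closed by continuity of $R_u$, and it is a subsemigroup of $T$: if $v_1, v_2 \in E$, then $(v_1 v_2)u = v_1(v_2 u) = v_1 u = u$.

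By minimality of $T$ applied to $E$, we conclude $E = T$. But $u \in T = E$, so $uu = u$, and $u$ is the required idempotent. The only delicate point — and what I expect to be the main conceptual step — is checking that $Tu$ is a subsemigroup using \emph{only} right continuity; the associative rearrangement $(tu)(t'u) = \bigl(t(ut')\bigr)u$ is what makes this work without needing left shifts to be continuous.
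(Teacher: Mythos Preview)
Your argument is correct and is precisely the standard Ellis--Numakura proof. Note, however, that the paper does not supply its own proof of this proposition: it simply quotes the result as Ellis' theorem and refers to \cite{HS}, Theorem~2.5, so there is nothing in the paper to compare against beyond observing that your write-up matches the classical argument given in that reference.
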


\begin{prop}\label{EllisTopGrp}
Let $(G,\cd,\tau)$ be both a semitopological group and a locally compact
topological space. Then $(G,\cd,\tau)$ is a topological group.
\end{prop}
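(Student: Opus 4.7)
The plan is to split the statement into two halves: joint continuity of multiplication $m\:G\cx G\to G$, and continuity of inversion $J\:G\to G$, and tackle them in that order.

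For the first half, I would reduce to showing that $m$ is jointly continuous at the neutral element $(e,e)$. Since $G$ is a semitopological group, each left shift $L_a$ and right shift $R_a$ is continuous, and its inverse (namely $L_{a^{-1}}$ or $R_{a^{-1}}$) is continuous as well, so every shift is a homeomorphism of $G$. For $(a,b)\in G\cx G$, the identity $m(x,y)=a^{-1}\cd m(ax,yb)\cd b^{-1}$ shows that $m$ is jointly continuous at $(e,e)$ iff it is jointly continuous at $(a,b)$. Thus it is enough to exhibit a \emph{single} point of $G\cx G$ at which $m$ is jointly continuous.

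The main obstacle is producing such a point, and this is where local compactness enters in an essential way. I would fix a compact neighborhood $K$ of $e$, consider the restriction $m|_{K\cx K}\:K\cx K\to G$, and invoke a Namioka-type theorem: a separately continuous map from the product of a Baire space and a compact Hausdorff space into a regular space is jointly continuous at each point of a dense $G_\delta$ in the first factor (for every fixed second coordinate). Applying this to $m|_{K\cx K}$ (after possibly shrinking $K$ so that $m$ lands in a compact set, using local compactness of $G$) yields a point $(a,b)\in K\cx K$ of joint continuity, and the previous paragraph transports this to $(e,e)$. This Baire-category step is the conceptual heart of Ellis's argument and the place I expect the most subtlety, especially in verifying the hypotheses of the Namioka-style lemma in the locally compact (rather than compact) setting.

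For the second half, having shown that $G$ is now a paratopological group (multiplication jointly continuous) which is moreover locally compact Hausdorff, I would deduce continuity of inversion as follows. The graph $\Gamma=\{(x,x^{-1})\:x\in G\}$ equals $m^{-1}(e)$, hence is closed in $G\cx G$. Given a net $x_\alpha\to x$, I pick a compact neighborhood $U$ of $x^{-1}$ and argue by contradiction: if $x_\alpha^{-1}\not\to x^{-1}$, some subnet avoids a neighborhood of $x^{-1}$; using joint continuity of $m$ together with local compactness (replacing the $x_\alpha$ by a cofinal piece lying in a compact neighborhood of $x$, and tracking their inverses via $x_\alpha\cd x_\alpha^{-1}=e$), I extract a sub-subnet with $x_\alpha^{-1}\to y$ for some $y\in G$, so that $xy=e$ forces $y=x^{-1}$, a contradiction. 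The delicate point here is ensuring that the inverses stay inside a compact set long enough to extract a convergent subnet, and for this I would use that once multiplication is jointly continuous, the map $x\mapsto x^{-1}$ sends a compact neighborhood of $x$ into a relatively compact set (via the equation $e=x_\alpha x_\alpha^{-1}$ and a standard tightness argument).
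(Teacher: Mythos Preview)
The paper does not prove this proposition at all: it is quoted, together with Proposition~\ref{EllisIdempot}, as one of two background results due to Ellis, with a pointer to \cite{HS}, Corollary~2.39. There is thus no ``paper's own proof'' for you to match; your sketch is an attempted reconstruction of the classical Ellis argument, not of anything in this article.

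As an outline of Ellis's theorem your first half is the standard route (Namioka-type joint continuity at a single point, then homogeneity to transport it to $(e,e)$), though the aside about ``shrinking $K$ so that $m$ lands in a compact set'' is circular: you cannot infer that $K\cdot K$ is compact before you know $m$ is jointly continuous on $K\times K$. The usual formulation applies Namioka directly with $G$ (locally compact Hausdorff, hence Baire and regular) as the target. Your second half has a real gap that you yourself flag: to extract a convergent subnet of $x_\alpha^{-1}$ you must first trap these inverses in a compact set, and the phrase ``a standard tightness argument via $e=x_\alpha x_\alpha^{-1}$'' does not do this---that identity gives no a~priori control on where $x_\alpha^{-1}$ lives. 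The classical proofs handle this step by a more structural argument (e.g., producing an open subgroup contained in a compact neighbourhood of $e$, or an open-mapping style step), which is exactly why the paper cites the result rather than reproving it.
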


The notions of a congruence relation and of a quotient algebra make sense for
general algebras with any system of finitary operations (see, e.g., Burris,
Sankappanavar \cite{BS} or Gr\"{a}tzer \cite{Grt}), however, for our purpose,
it is sufficient to recall them just in the case of semigroups. An equivalence
relation $E$ on a semigroup $S$ is called a \textit{congruence relation} on $S$
if it is preserved by the semigroup operation, i.e., if for any
$(x_1,y_1), (x_2,y_2) \in E$ we have $(x_1 x_2,y_1 y_2) \in E$, as well. Denoting
by $[x]_E$ the coset of the element $x \in S$ with respect to $E$, the quotient
$S/E$ can be turned into a semigroup defining the operation on $S/E$ by
$$
[x]_E \cd [y]_E = [xy]_E,
$$
for $x,y \in S$. If $S$ is a group then any congruence relation $E$ on $S$ is
uniquely determined by the coset $[1]_E$ (which is a normal subgroup of $S$)
and $E$ is preserved by the inverse map, as well, i.e.,
$\bigl(x^{-1},y^{-1}\bigr) \in E$ for any $(x,y) \in E$, thus $S/E$ becomes
a group with the inverse map given by $[x]_E^{-1} = \bigl[x^{-1}]_E$. However,
for a semigroup this is not the case in general. If $\tau$ is a topology on $S$
and $E$ is \textit{closed} as subset of the product $S \cx S$ then $S/E$ endowed
with the quotient topology of $\tau/E$, i.e., the finest topology making the
canonical projection $x \mto [x]_E\:S \to S/E$ continuous, becomes a hausdorff
topological space. If $(S,\cd,\tau)$ is a right topological semigroup and
$E$ is a closed congruence relation on it, then the quotient $(S/E, \cd, \tau/E)$
is a right topological semigroup, as well. The properties ``being a left
topological semigroup'', ``being a topological group'', etc., are preserved
under the quotients with respect to closed congruence relations in an analogous way.

If $(S,\cd,\tau)$ is a right topological semigroup, then it is clear that the
full relation $S \cx S$ is a closed congruence relation on $S$ and the intersection
of any family $(E_i)_{i \in I}$ of closed congruence relations on $S$ is a closed
congruence relation on $S$. As a consequence, for every subset $D \sbs S \cx S$,
there exists the least closed congruence relation $\vFi$ on $S$ such that
$D \sbs \vFi$, namely
$$
\vFi = \bigcap\{E \mid
\text{$E$ is closed congruence relation on $S$ and $D \sbs E$}\}.
$$
In the discrete case, the description in purely algebraic terms of the least
congruence containing a given set $D \sbs S \cx S$ can be found in \cite{BS} or
\cite{Grt}.

Let $(S,\cd)$ be a semigroup and $\tau$ be a topology on $S$. We denote by
$\Th(S)$ the least closed congruence relation on $S$ containing all the
pairs $(eu,u)$ where $u \in S$ is an arbitrary element and $e \in S$ is an
idempotent. If $(S,\cd)$ has a unit element $1$ then, obviously, $\Th(S)$
coincides with the least closed congruence on $S$ containing all the pairs
$(e,1)$ where $e$ runs over all the idempotents in $S$.

Now, we can record the following easy consequence of the first of Ellis'
theorems (Proposition~\ref{EllisIdempot}).

\begin{thm}\label{IdempQuotient}
Let $(S,\cd,\tau)$ be a compact right topological semigroup. Then the
quotient $S/\Th(S)$ endowed with the quotient topology is a compact right
topological {\rm group}. Moreover, if \,$E$ is any closed congruence
relation on $S$, then $S/E$ is a right topological group if and only if
\,$\Th(S) \sbs E$.
\end{thm}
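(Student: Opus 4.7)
The quotient $T := S/\Th(S)$ is automatically a compact Hausdorff right topological semigroup since $\Th(S)$ is a closed congruence, and I write $\pi\:S \to T$ for the projection. By the very definition of $\Th(S)$, for every idempotent $e \in S$ and every $u \in S$ the pair $(eu,u)$ lies in $\Th(S)$, so $\pi(e)\pi(u) = \pi(u)$; thus the image of every $S$-idempotent is a left identity of $T$. The plan is to show $T$ is algebraically a group, whence the ``moreover'' clause follows formally.

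I first upgrade the observation above by showing that \emph{every} idempotent of $T$ arises as the image of an idempotent of $S$. Given $t \in T$ with $t^2 = t$, choose $u \in S$ with $\pi(u) = t$ and apply Proposition~\ref{EllisIdempot} to the compact right topological subsemigroup $\overline{\{u^n : n \geq 1\}} \sbs S$: it contains an idempotent $f = \lim_\alpha u^{n_\alpha}$ along some net. Continuity of $\pi$ then gives $\pi(f) = \lim_\alpha t^{n_\alpha} = t$, using $t^n = t$. Hence every idempotent of $T$ is a left identity.

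The core step, which I expect to be the main obstacle, is to promote these observations to the conclusion that $T$ is a group. For an arbitrary $t \in T$, I would apply Proposition~\ref{EllisIdempot} to the closed subsemigroup $\overline{\{t^n : n \geq 1\}} \sbs T$ to obtain an idempotent $s = \lim_\alpha t^{n_\alpha}$ (a left identity by the previous step). Arranging $n_\alpha \geq 2$ and passing to a subnet with $t^{n_\alpha - 1} \to r$, right-continuity of multiplication gives $rt = \lim_\alpha t^{n_\alpha - 1} \cd t = \lim_\alpha t^{n_\alpha} = s$, so every $t \in T$ has a left inverse relative to a left identity. Combined with ``every idempotent of $T$ is a left identity'', a semigroup-theoretic argument then collapses the various left identities into a single two-sided identity and promotes left inverses to two-sided ones, making $T$ a group.

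For the final clause: if $\Th(S) \sbs E$, then $S/E$ is a quotient of the right topological group $T$ by a closed congruence, hence itself a right topological group. Conversely, if $S/E$ is a right topological group, then for any idempotent $e \in S$ the image $\pi_E(e)$ is an idempotent of the \emph{group} $S/E$ and therefore equals its identity; hence $\pi_E(eu) = \pi_E(u)$ for every $u$, giving $(eu, u) \in E$, and minimality of $\Th(S)$ yields $\Th(S) \sbs E$.
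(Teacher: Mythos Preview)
Your applications of Proposition~\ref{EllisIdempot} in the third and fourth paragraphs are not justified. In a compact \emph{right} topological semigroup only the right translations $R_a$ are continuous, so the closure of a subsemigroup---even of the cyclic semigroup $\{u^n : n \ge 1\}$---need not be a subsemigroup. Concretely, for $\overline{\{u^n\}}$ to be closed under multiplication you would need $u\cdot\overline{\{u^n\}} \sbs \overline{\{u^n\}}$, which requires continuity of $L_u$ on this set; in $\btG$, for instance, $L_u$ fails to be continuous for every nonprincipal $u$. Hence you cannot invoke Ellis' theorem on $\overline{\{u^n\}} \sbs S$ to lift idempotents, nor on $\overline{\{t^n\}} \sbs T$ to produce left inverses.

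The paper sidesteps this by applying Ellis' theorem to the left ideal $Sv = R_v[S]$ instead: this is compact as the continuous image of $S$ under $R_v$, and it is a subsemigroup for the trivial algebraic reason $(sv)(tv) = (svt)v \in Sv$. An idempotent $e = uv \in Sv$ then gives $[u][v] = [e]$ with $[e]$ a left unit of $S/\Th(S)$, so $[u]$ serves as a left inverse of $[v]$. This single step replaces both your idempotent-lifting detour and your left-inverse construction; the paper never needs to show that idempotents of $T$ lift to idempotents of $S$. Your treatment of the ``moreover'' clause matches the paper's.
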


\begin{proof}
Let us denote $\Th = \Th(S)$; then $S/\Th$ with the quotient topology is a
compact (hausdorff) right compact topological semigroup. It suffices to show
that it is a group. Let $[u] = [u]_\Th \in S/\Th$ denote the coset of the
element $u \in S$ with respect to $\Th$. Obviously, for any idempotent $e \in S$,
the coset $[e]$ is the left unit in $S/\Th$. It remains to show that any coset
$[v]$ has a left inverse in $S/\Th$. Since the right side multiplication by $v$
is continuous, the set
$$
Sv = R_v[S] = \{sv \mid s \in S\}
$$
is a compact subsemigroup of $S$, hence it is a compact right topological semigroup,
as well. Thus, by the first Ellis' theorem (Proposition~\ref{EllisIdempot}), $Sv$
contains an idempotent of the form $e = uv$ for some $u \in S$. Then $[u]\,[v] = [e]$
is the unit in $S/\Th$ and $[u]$ is the left inverse of $[v]$. Thus $S/\Th$ is indeed
a group.

If $E$ is a closed congruence relation on $S$ then $S/E$ is a right topological
semigroup. If it is a group, then it is clear that all the idempotents in $S$ must
be sent to the unit element of $S/E$ by the canonical projection $S \to S/E$. Hence
$$
[eu]_E = [e]_E \cd [u]_E = [u]_E,
$$
and $(eu,u) \in E$, for any $u,e \in S$ whenever $e$ is an idempotent. Thus
$\Th \sbs E$. If $\Th \sbs E$ then $S/E$, as a homomorphic image
of the group $S/\Th$, is itself a group.
\end{proof}

The following universal property of the the canonical projection
$\vth\:S \to S/\Th(S)$ (and of the quotient $S/\Th(S)$) follows immediately
from the last theorem.

\begin{cor}\label{IdempQuotient-cor}
Let $(S,\cd,\tau)$ be a compact right topological semigroup,
$(G,\cd,\tau')$ be a right topological group and $\phi\:S \to G$ be a
continuous homomomorphism. Then there is a unique continuous homomorphism
$\phi'\:S/\Th(S) \to G$ such that
$$
\phi = \phi' \co \vth.
$$
\end{cor}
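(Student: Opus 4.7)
The plan is to realize $\Th(S) \sbs E$, where $E = \{(x,y) \in S \cx S : \phi(x) = \phi(y)\}$ is the kernel pair of $\phi$, and then invoke the universal property of quotients by closed congruences. First I would check that $E$ is a closed congruence on $S$: closedness uses that $G$ is hausdorff and $\phi$ continuous (so $E$ is the preimage of the diagonal of $G$ under $\phi \cx \phi$), while the congruence property follows from $\phi$ being a homomorphism.

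The main step is to show $\Th(S) \sbs E$. Since $\Th(S)$ is the \emph{least} closed congruence containing all pairs $(eu,u)$ with $e \in S$ an idempotent and $u \in S$ arbitrary, it suffices to check $(eu,u) \in E$, i.e., $\phi(eu) = \phi(u)$, for every such pair. But $\phi(e)$ is an idempotent in the group $G$, and a group has a unique idempotent, namely its unit $1_G$. Hence $\phi(e) = 1_G$ and $\phi(eu) = \phi(e)\,\phi(u) = \phi(u)$, which is what we needed. This is really the only nontrivial observation in the argument: the role of the hypothesis ``$G$ is a group'' is precisely to force $\phi$ to collapse every idempotent of $S$ to the unit.

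With $\Th(S) \sbs E$ in hand, the standard quotient construction defines $\phi'\: S/\Th(S) \to G$ by $\phi'([u]_{\Th(S)}) = \phi(u)$; the inclusion $\Th(S) \sbs E$ is exactly what makes this well-defined. The homomorphism property is immediate from the definition of multiplication on $S/\Th(S)$, continuity of $\phi'$ follows from the universal property of the quotient topology (since $\phi = \phi' \co \vth$ is continuous and $\vth$ is a topological quotient map), and uniqueness is forced by surjectivity of $\vth$. No serious obstacle arises; essentially all the content is already packaged into Theorem~\ref{IdempQuotient}, and the corollary is the translation of that theorem into categorical language.
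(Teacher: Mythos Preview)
Your argument is correct and is precisely the natural unpacking of what the paper asserts: the paper gives no separate proof of this corollary, merely stating that it ``follows immediately from the last theorem'' (Theorem~\ref{IdempQuotient}). Your verification that $\Th(S) \sbs E$ via $\phi(e) = 1_G$ is exactly the content of the ``only if'' direction in the proof of Theorem~\ref{IdempQuotient}, applied to the kernel pair of $\phi$ rather than to an abstract closed congruence, so there is no meaningful difference in approach.
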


The following proposition is essentially just a more detailed reformulation
of the second of Ellis' theorems, quoted as Proposition~\ref{EllisTopGrp} here.

\begin{prop}\label{EllisTopGrp1}
Let $(G,\cdot,\tau)$ be a locally compact right topological semigroup. Then
the following conditions are equivalent:
\begin{enum}
\item[\rm (i)]
$(G,\cdot,\tau)$ is a topological group;

\item[\rm (ii)]
the inverse map $J\:G \to G$ is continuous;

\item[\rm (iii)]
$(G,\cdot,\tau)$ is a left topological group.
\end{enum}
\end{prop}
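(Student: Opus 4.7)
The plan is to prove the implications (i) $\imp$ (ii), (i) $\imp$ (iii), (ii) $\imp$ (iii), and (iii) $\imp$ (i), so that all three conditions become equivalent. Two of them, namely (i) $\imp$ (ii) and (i) $\imp$ (iii), are immediate from the definition of a topological group: joint continuity of the multiplication entails the continuity of each left shift $L_a$, and the continuity of the inverse is part of the definition. So there is nothing to do there.

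The substantive implications are (ii) $\imp$ (iii) and (iii) $\imp$ (i). For (ii) $\imp$ (iii), I would use the algebraic identity $ax = \bigl(x^{-1}a^{-1}\bigr)^{-1}$, valid in any group, which translates into the functional identity
$$
L_a = J \co R_{a^{-1}} \co J
$$
for every $a \in G$. Since by hypothesis $J$ is continuous and $R_{a^{-1}}$ is continuous (as $G$ is a right topological semigroup), $L_a$ is continuous as a composition of continuous maps. Thus $(G,\cd,\tau)$ is a left topological group.

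For (iii) $\imp$ (i), observe that if $G$ is simultaneously a right topological and a left topological group, then all shifts $L_a, R_a$ are continuous, so $(G,\cd,\tau)$ is a semitopological group. Combined with local compactness, the second of Ellis' theorems, quoted as Proposition~\ref{EllisTopGrp}, immediately yields that $(G,\cd,\tau)$ is a topological group.

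There is really no serious obstacle here: the only non-trivial ingredient is Ellis' theorem, which is already invoked as a black box, and the only new piece of content is the elementary observation that in a right topological group the continuity of $J$ forces the continuity of all left shifts via the conjugation $L_a = J \co R_{a^{-1}} \co J$. The proposition is, as the preamble to it already indicates, essentially a bookkeeping reformulation of Proposition~\ref{EllisTopGrp}.
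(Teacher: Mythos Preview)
Your proof is correct and follows exactly the paper's approach: the paper likewise dismisses (i)$\imp$(ii) as trivial, derives (ii)$\imp$(iii) from the identity $L_a = J \co R_{a^{-1}} \co J$, and invokes Proposition~\ref{EllisTopGrp} for (iii)$\imp$(i). Your extra implication (i)$\imp$(iii) is redundant once the cycle (i)$\imp$(ii)$\imp$(iii)$\imp$(i) is closed, but it does no harm.
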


\begin{proof}
The implication (i)$\imp$(ii) is trivial, (iii)$\imp$(i) is the second of Ellis'
theorems (Proposition~\ref{EllisTopGrp}). Thus it suffices to prove
(ii)$\imp$(iii).

For any $a,x \in S$ we have $ax = (x^{-1}a^{-1})^{-1}$, i.e.,
$$
L_a = J \co R_{a^{-1}} \co J
$$
which shows the continuity of $L_a$.
\end{proof}

\section{The Stone-\v{C}ech compactification, Schur ultrafilters, \\
and the Bohr compactification}

\noindent
The Stone-\v{C}ech compactification of a set $X$, regarded as a discrete
topological space, consists of the set $\btX$ of all ultrafilters on $X$
endowed with the topology with the base formed by clopen sets of the form
$\{u \in \btX\mid A \in u\}$ where $A \sbs X$. Identifying each element $x \in X$
with the principal ultrafilter $\{A \sbs X\mid x \in A\}$, $X$~becomes embedded into
$\btX$ as a dense subset. Moreover, $\btX$ has the following universal property:
for any compact topological space $K$ and any (automatically continuous) mapping
$f\:X \to K$ there is a unique continuous mapping $\tilde f\:\btX \to K$ such that
$f(x) = \tilde f(x)$ for any $x \in X$, given by the $u$-limit
$$
\tilde f(u) = u\text{-}\!\lim_{x \in X} f(x) = \lim_{x \to u} f(x),
$$
for $u \in \btX$ (which is well defined by the compactness of $K$). Then $\tilde f$
is surjective if and only if $f[X]$ is dense in $K$ (see Hindman, Strauss \cite{HS}).

If $(S,\cd)$ is a (discrete) semigroup, then the semigroup operation can be
extended from $S$ to $\btS$ putting
$$
A \in uv \Iff \bigl\{s \in S\mid s^{-1}A \in v\bigr\} \in u,
$$
for $u,v \in \btS$, $A \sbs S$, where
$$
s^{-1}A = L_s^{-1}[A] = \{x \in S\mid sx \in A\}.
$$
Then $(\btS,\cd)$ is a compact right topological semigroup; however, the left
shifts $L_u\:\btS \to \btS$ are continuous just for the principal ultrafilters $u$,
i.e., elements of $S$. In general, $\btS$ is not commutative even if $S$ is
(see \cite{HS}, again).

Following Protasov \cite{Prot}, we call an ultrafilter $u \in \btS$ a
\textit{Schur ultrafilter} if for each $A \in u$ there exist $a,b \in A$ such
that $ab \in A$. It can be easily verified that every idempotent ultrafilter is
a Schur one. On the other hand, not every Schur ultrafilter is idempotent.
Namely it is known that that there are idempotent ultrafilters $e_1,e_2$ in the
Stone-\v{C}ech compactification $\btZ$ of the abelian group $(\bZ,+)$ such that
$e_1 + e_2$ is not idempotent. However, as proved by Protasov \cite{Prot},
Lemma~5.1, if $(G,+)$ is an abelian group, then the sum $u + v$ of any two Schur
ulrafilters in $\btG$ is Schur again. Hence, $e_1 + e_2$ is a Schur ultrafilter
which is not idempotent.

If $(G,\cd)$ is a group then, for each set $A \sbs G$, we denote
$A^{-1} = \bigl\{a^{-1} \mid a \in A\bigr\}$, as well as
$u^{-1} = \bigl\{A^{-1} \mid A \in u\bigr\}$ for any ultrafilter $u \in \btG$.
The mapping $u \mto u^{-1}$ is obviously a homeomorphism $\btG \to \btG$. We will
need the following result proved in \cite{Prot},
Lemma~5.2.

\begin{prop}\label{InvSchurUfi}
Let $G$ be a group. Then, for every ultrafilter $u \in \btG$, the ultrafilter
$uu^{-1}$ is Schur.
\end{prop}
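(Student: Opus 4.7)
The plan is to unpack the definition of $uu^{-1}$ and then exhibit Schur witnesses by a short chain of three intersections within the ultrafilter $u$. Starting from $A \in uu^{-1}$, the definition of the semigroup operation on $\btG$ gives that the set $C = \{s \in G \mid s^{-1}A \in u^{-1}\}$ lies in $u$. Since $u^{-1}$ is the image of $u$ under inversion, $s^{-1}A \in u^{-1}$ is equivalent to $(s^{-1}A)^{-1} \in u$, and a direct computation yields
$$
(s^{-1}A)^{-1} = \{x^{-1} \mid sx \in A\} = \{y \in G \mid sy^{-1} \in A\}.
$$
Writing $D_s$ for this last set, the information I have to work with is: $C \in u$, and $D_s \in u$ for every $s \in C$.

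Now I would carry out the key three-step selection. Pick any $s \in C$ (possible since $C \in u$). Because $C \cap D_s \in u$ is nonempty, pick $y \in C \cap D_s$: from $y \in D_s$ I get $sy^{-1} \in A$, and from $y \in C$ I get $D_y \in u$. Finally, pick any $z \in D_s \cap D_y$, which is again nonempty, giving both $sz^{-1} \in A$ and $yz^{-1} \in A$. Setting $a = sy^{-1}$ and $b = yz^{-1}$, both $a$ and $b$ lie in $A$, and
$$
ab = sy^{-1} \cdot yz^{-1} = sz^{-1} \in A,
$$
which is exactly the Schur condition on $A$. Since $A \in uu^{-1}$ was arbitrary, this proves $uu^{-1}$ is Schur.

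The only genuine obstacle is the algebraic bookkeeping of the first step: one must recognise that $s^{-1}A \in u^{-1}$ rewrites precisely as $\{y \mid sy^{-1} \in A\} \in u$, because this is the shape that makes the telescoping identity $(sy^{-1})(yz^{-1}) = sz^{-1}$ convert two chosen members of $A$ into a third. Once $D_s$ has been identified, the rest is forced: three sets in $u$ are intersected, and the product $ab$ lands in $A$ automatically.
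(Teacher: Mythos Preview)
Your argument is correct. The unpacking of $A \in uu^{-1}$ into the sets $C$ and $D_s = \{y \in G \mid sy^{-1} \in A\}$ is accurate, and the three-step selection produces $a = sy^{-1}$, $b = yz^{-1}$ with $ab = sz^{-1}$ by associativity alone, so the argument works for arbitrary (not necessarily abelian) groups, as required.

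Note, however, that the paper does not actually prove this proposition: it merely records the statement and cites Protasov \cite{Prot}, Lemma~5.2, for the proof. Hence there is no ``paper's own proof'' to compare against. What you have written is essentially the standard argument (and, in fact, is Protasov's proof): one reduces membership in $u^{-1}$ to membership in $u$ via inversion, and then exploits the finite intersection property of $u$ to telescope $(sy^{-1})(yz^{-1}) = sz^{-1}$. So your proof fills in precisely what the paper outsources to the reference.
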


Let $(G,\cd)$ be any group. We denote by $\vXi(G)$ the least closed congruence
relation on $\btG$ containing all the pairs $(u,1)$ where $u \in \btG$ is a Schur
ultrafilter. For the least closed congruence $\Th(\btG)$ on $\btG$ merging
together all the idempotents we obviously have $\Th(\btG) \sbs \vXi(G)$.

\begin{thm}\label{SchurQuotient}
Let $G$ be a group. Then the quotient $\btG/\vXi(G)$ is a compact topological
group.
\end{thm}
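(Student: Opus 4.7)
The plan is to combine Theorem~\ref{IdempQuotient}, Proposition~\ref{InvSchurUfi}, and Proposition~\ref{EllisTopGrp1}. First, since every idempotent of $\btG$ is a Schur ultrafilter, the pairs $(e,1)$ with $e$ an idempotent all lie in $\vXi(G)$, hence $\Th(\btG) \sbs \vXi(G)$. By Theorem~\ref{IdempQuotient}, the quotient $\btG/\vXi(G)$ with the quotient topology is therefore a compact right topological group. To promote it to a topological group via Proposition~\ref{EllisTopGrp1}, it suffices to prove that the inverse map on $\btG/\vXi(G)$ is continuous.

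The crucial ingredient for this is Proposition~\ref{InvSchurUfi}, which tells us that for every $u \in \btG$ both $uu^{-1}$ and $u^{-1}u = (u^{-1})(u^{-1})^{-1}$ are Schur ultrafilters. Writing $[\,\cdot\,]$ for the canonical projection $\pi\:\btG \to \btG/\vXi(G)$, this forces
$$
[u]\,[u^{-1}] = [uu^{-1}] = [1] = [u^{-1}u] = [u^{-1}]\,[u],
$$
so $[u^{-1}]$ is the (two-sided) group inverse of $[u]$. Since inverses in a group are unique, it follows that whenever $[u] = [v]$ we have $[u^{-1}] = [v^{-1}]$; in other words, the homeomorphism $J\:\btG \to \btG$, $u \mto u^{-1}$, descends to a well-defined map $J'\:\btG/\vXi(G) \to \btG/\vXi(G)$, which is precisely the inversion on the quotient group.

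Continuity of $J'$ then follows from the identity $J' \co \pi = \pi \co J$: since $\pi$ is a quotient map and the right-hand composition is continuous, so is $J'$. Proposition~\ref{EllisTopGrp1} (the implication (ii)$\imp$(i), applied to the compact right topological group $\btG/\vXi(G)$) now yields that $\btG/\vXi(G)$ is a topological group.

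Nothing in this route looks genuinely delicate: the hard work has been packaged into the preceding results. The one place where group-theoretic content really enters is Proposition~\ref{InvSchurUfi}; without it one could only conclude that $\btG/\vXi(G)$ is a right topological group, with no handle on the continuity of inversion. Thus the main ``obstacle'' is simply recognising that Protasov's lemma on the Schurness of $uu^{-1}$ is exactly what is needed to match the algebraic inverse in the quotient with the topological involution on $\btG$.
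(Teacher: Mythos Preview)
Your proof is correct and follows essentially the same route as the paper: use $\Th(\btG) \sbs \vXi(G)$ together with Theorem~\ref{IdempQuotient} to get a compact right topological group, invoke Proposition~\ref{InvSchurUfi} to identify $[u^{-1}]$ as the group inverse of $[u]$, deduce continuity of inversion from that of $u \mto u^{-1}$ on $\btG$, and conclude via Proposition~\ref{EllisTopGrp1}. Your write-up is slightly more explicit than the paper's in justifying why $J$ descends to the quotient (uniqueness of inverses), but the argument is the same.
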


\begin{proof}
Let us abbreviate $\Th = \Th(\btG)$, \,$\vXi = \vXi(G)$, and denote by
$[u] = [u]_\vXi$ the coset of any ultrafilter $u \in \btG$ with respect to $\vXi$.
Since $\Th \sbs \vXi$ and $\btG/\Th$ is (algebraically) a group, so is its
homomorphic image $\btG/\vXi$. Obviously, $\btG/\vXi$ endowed with the quotient
topology, is a compact (hausdorff) space. Thus it suffices to show that $\btG/\vXi$
is indeed a \textit{topological} group. Clearly, $\btG/\vXi$ is a right topological
group. From Proposition~\ref{InvSchurUfi} it follows that the coset $[u^{-1}]$ is
the inverse element of the coset $[u]$. At the same time, the inverse map
$u \mto u^{-1}$ is continuous on $\btG$, hence the inverse map
$[u] \mto [u]^{-1} = [u^{-1}]$ is continuous on $\btG/\vXi$, as well. From
Proposition~\ref{EllisTopGrp1} it follows that $\btG/\vXi$ is a topological group.
\end{proof}

\begin{rem}
The fact that the quotient $\btG/\vXi(G)$ is (algebraically) a group could be proved
also directly, realizing that every right shift $(\btG)u$ contains an idempotent
hence a Schur ultrafilter, similarly as in the proof of Theorem~\ref{IdempQuotient},
and without using the fact that $\btG/\vXi(G)$ is a homomorphic image of
$\btG/\Th(\btG)$. However, Protasov's Lemma (Proposition~\ref{InvSchurUfi})
guarantees that this Schur ultrafilter has the particular form $u^{-1}u$, as well as
the continuity of the inverse map needed in order to allow for the application of
Proposition~\ref{EllisTopGrp1}.
\end{rem}

\begin{cor}\label{Abel}
Let $G$ be an abelian group. Then the quotient $\btG/\vXi(G)$ is a compact
{\rm abelian} group.
\end{cor}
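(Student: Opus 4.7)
The plan is to deduce commutativity of $\btG/\vXi(G)$ from the commutativity of $G$ by a density-plus-continuity argument, using Theorem~\ref{SchurQuotient} as the essential input. By that theorem, $\btG/\vXi(G)$ is already known to be a compact topological group; the only thing left to verify is that the multiplication is commutative.

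First I would observe that, denoting by $\pi\:\btG \to \btG/\vXi(G)$ the canonical projection, the image $\pi[G]$ is dense in $\btG/\vXi(G)$. This is immediate because $G$ is dense in $\btG$, $\pi$ is continuous and surjective, and continuous surjections map dense sets to dense sets. Moreover, since $\pi$ restricted to $G$ is a semigroup homomorphism and $(G,\cd)$ is abelian, the subset $\pi[G]$ is an abelian subgroup of $\btG/\vXi(G)$.

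Next, I would consider the two mappings
$$
\mu,\,\mu'\:\btG/\vXi(G) \cx \btG/\vXi(G) \to \btG/\vXi(G),
\qquad \mu(x,y) = xy,\ \mu'(x,y) = yx.
$$
The key point is that, by Theorem~\ref{SchurQuotient}, $\btG/\vXi(G)$ is a genuine \emph{topological} group, so multiplication is jointly continuous; hence both $\mu$ and $\mu'$ are continuous. They agree on the set $\pi[G] \cx \pi[G]$, which is dense in the product $\btG/\vXi(G) \cx \btG/\vXi(G)$. Since $\btG/\vXi(G)$ is hausdorff, two continuous mappings into it that coincide on a dense subset must coincide everywhere, so $\mu = \mu'$, i.e., $\btG/\vXi(G)$ is abelian.

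There is no real obstacle once Theorem~\ref{SchurQuotient} is in hand; the subtle point to emphasize is precisely that we use \emph{joint} continuity of the multiplication. A purely right topological semigroup structure would not suffice, since then only separate continuity of $\mu$ and $\mu'$ would be available and the argument by density would break down. This is why the statement is phrased for abelian $G$ and why it is harvested as a corollary of Theorem~\ref{SchurQuotient}, whose conclusion upgrades $\btG/\vXi(G)$ from a right topological group to a topological group via Proposition~\ref{EllisTopGrp1}.
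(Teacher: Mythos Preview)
Your proof is correct and follows the same idea as the paper's one-line argument, which simply notes that the topological group $\btG/\vXi(G)$ contains the dense abelian subgroup $G/\vXi(G)$. You have spelled out in detail the standard density-plus-joint-continuity argument that underlies that sentence.
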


\begin{proof}
It suffices to realize that the topological group $\btG/\vXi(G)$ contains a
dense abelian subgroup $G/\vXi(G)$.
\end{proof}

Let $G$ be any locally compact abelian (LCA) group. Its \textit{dual group}
$\dG$ consists of all continuous homomorphisms (characters) from $G$ to the
multiplicative group $\bT$ of all complex units, and, endowed with the
pointwise multiplication of characters and the compact-open topology, it is
an LCA group, too. In particular, $G$ is discrete if and only if $\dG$ is
compact and vice versa. The celebrated Pontryagin-van Kampen duality theorem
states that the canonical mapping $G \to \ddG$, sending any element $x \in G$
to the character $\hat x\:\dG \to \bT$ given by $\hat x(\gamma) = \gamma(x)$,
is an isomorphism of topological groups. Using this map, $G$ is identified with
its second dual $\ddG$, and the hat over $x$ is usually omitted (see, e.g.,
Hewitt, Ross~\cite{HR1}).

Using Pontryagin-van Kampen duality, the Bohr compactification $\fbG$ of any
LCA group $G$ can be defined as the dual group $\ddGd$ of $\dGd$, i.e., of the
dual group $\dG$ endowed with the discrete topology (see \cite{HR1}, \cite{HR2}).
Thus $\fbG$ consists of all homomorphisms $h\:\dG \to \bT$, and not just of the
continuous ones. Since $\dGd$ is discrete, its dual $\fbG$ is compact and the
canonical map $x \mto \hat x$ maps $G$ onto the dense subgroup $\ddG \cong G$
of $\fbG$. Alternatively, the Bohr compactification $\fbG$ can be characterized
through the following universal property: For any continuous homomorphism
$\phi\:G \to K$ from an LCA group $G$ to a compact topological group $K$ there
exists a unique continuous homomorphism $\phi^\sharp\:\fbG \to K$ such that
$\phi^\sharp\bigl(\hat x\bigr) = \phi(x)$ for all $x \in G$. Additionally,
$\phi^\sharp$ is surjective if and only if $\phi[G]$ is dense in $K$.

For any (discrete) abelian group $G$, the existence of the canonical map
$\btG \to \fbG$ follows from the universal property of $\btG$: There is a unique
continuous map $\xi\:\btG \to \fbG$ such that $\xi(x) = \hat x = x$ for each
$x \in G$. A more detailed description of this mapping uses the same universal
property of $\btG$ once again. Every character $\gamma \in \dG$, being a continuous
map $\gamma\:G \to \bT$, extends to a continuous map $\wtl\gamma\:\btG \to \bT$.
This mapping sends each ultrafilter $u \in \btG$ to the $u$-limit
$$
\wtl\gamma(u) = u\text{-}\!\lim_{x \in G} \gamma(x) = \lim_{x \to u} \gamma(x),
$$
which is well defined due to the compactness of the unit circle $\bT$.
At the same time, since the multiplication on $\bT$ is continuous, we have
$$
\wtl{\gamma\chi}(u) = \lim_{x \to u} (\gamma\chi)(x)
= \lim_{x \to u} \gamma(x)\,\lim_{x \to u} \chi(x) = \wtl\gamma(u)\,\wtl\chi(u),
$$
for $\gamma,\chi \in \dG$. That way every ultrafilter $u \in \btG$ induces a character
$\xi_u\:\dGd \to \bT$ given by
$$
\xi_u(\gamma) = \wtl\gamma(u),
$$
for $\gamma \in \dG$. Obviously, for a principal ultrafilter $x \in G$, we have
$\xi_x(\gamma) = \gamma(x) = \hat x(\gamma)$. Hence the assignment $u \mto \xi_u$
necessarily coincides with the canonical continuous surjective map
$\xi\:\btG \to \fbG$ induced by the inclusion map $G \to \fbG$.

Because of its nice logarithmic properties, we find more convenient to use the arc metric
$\lv\arg(x/y)\rv$ on the unit circle $\bT$ in the proofs of the next two propositions,
instead of the euclidian one. Obviously, they both induce the same topology on $\bT$.

\begin{prop}\label{HomCanMap}
Let $G$ be an abelian group. Then the canonical map $\xi\:\btG \to \fbG$ is
a homomorphism $(\btG,\cd) \to (\fbG,\cd)$.
\end{prop}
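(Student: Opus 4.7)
The plan is to verify, for each character $\gamma \in \dG$, the scalar identity $\wtl\gamma(uv) = \wtl\gamma(u)\,\wtl\gamma(v)$ in $\bT$. Since the group operation on $\fbG = \ddGd$ is pointwise multiplication of characters of $\dGd$, this identity for every $\gamma$ is precisely the claim $\xi(uv) = \xi(u)\,\xi(v)$, because by definition $\xi(w)(\gamma) = \wtl\gamma(w)$ for $w \in \btG$.

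The core of the argument is the iterated-limit formula
\[
\wtl\gamma(uv) = u\text{-}\!\lim_{s \in G}\, v\text{-}\!\lim_{t \in G}\, \gamma(st),
\]
which rests on two distinct continuity properties of $\btG$. First, the right shift $R_v\:\btG \to \btG$ is continuous by the defining property of a right topological semigroup, so $\wtl\gamma \co R_v$ is continuous, and hence $\wtl\gamma(uv) = \wtl\gamma(R_v(u)) = u\text{-}\!\lim_s \wtl\gamma(sv)$, where the limit runs over principal ultrafilters $s \in G$. Second, for each such principal $s \in G$, the left shift $L_s\:\btG \to \btG$ is also continuous, being the Stone--\v{C}ech extension of the map $t \mto st$ on the discrete space $G$; by uniqueness of continuous extensions, $\wtl\gamma \co L_s = \wtl{\gamma \co L_s}$, and evaluating at $v$ yields $\wtl\gamma(sv) = v\text{-}\!\lim_t \gamma(st)$.

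Once this formula is established, I would exploit that $\gamma$ is a group homomorphism, so $\gamma(st) = \gamma(s)\,\gamma(t)$, together with the joint continuity of multiplication on $\bT$: for fixed $s \in G$, the inner $v$-limit factors as $\gamma(s)\,\wtl\gamma(v)$, and pulling the constant $\wtl\gamma(v)$ out of the outer $u$-limit then gives $\wtl\gamma(u)\,\wtl\gamma(v)$. The arc metric $\lv\arg(x/y)\rv$ announced before the statement fits this step naturally, via the additive bound $\lv\arg(xy/x'y')\rv \le \lv\arg(x/x')\rv + \lv\arg(y/y')\rv$, which allows one to keep $\gamma(s)\gamma(t)$ within a prescribed distance of $\wtl\gamma(u)\wtl\gamma(v)$ uniformly for $s$ in a suitable $A \in u$ and $t$ in a suitable $B \in v$, and then to transport the estimate to $\wtl\gamma(uv)$ by observing that a corresponding small neighborhood of $\wtl\gamma(u)\wtl\gamma(v)$ pulls back to a member of $uv$.

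The main obstacle is the double-limit step itself, since $\btG$ is only right-topological and left shifts $L_w$ are generally not continuous for $w \in \btG \setminus G$. What rescues the argument is that only $L_s$ for principal $s \in G$ is needed, precisely because the outer limit ranges over principal ultrafilters produced by the density of $G$ in $\btG$; after this observation everything reduces to the definition of the semigroup product on $\btG$ together with continuous arithmetic in $\bT$.
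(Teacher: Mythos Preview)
Your argument is correct. The paper's own proof is more direct and does not pass through the iterated-limit formula: it fixes $0 < \eps < \pi/2$, chooses $A \in u$ and $B \in v$ on which $\gamma$ is $\eps$-close (in the arc metric) to $\wtl\gamma(u)$ and $\wtl\gamma(v)$ respectively, observes that the product set $C = AB$ belongs to $uv$, and then uses $\gamma(ab) = \gamma(a)\,\gamma(b)$ together with the additive bound for $\arg$ to get $\lv\arg\bigl(\gamma(c)/\wtl\gamma(u)\wtl\gamma(v)\bigr)\rv < 2\eps$ for every $c \in C$. Your decomposition $\wtl\gamma(uv) = u\text{-}\lim_s\, v\text{-}\lim_t\, \gamma(st)$, justified via continuity of $R_v$ and of $L_s$ for principal $s$, is a cleaner structural route that needs no explicit $\eps$ and would work verbatim for any homomorphism from $G$ into a compact topological group, not just into $\bT$; the paper's hands-on version, by contrast, makes the combinatorial fact $AB \in uv$ visible and feeds directly into the remark following the proof about the filter $u \odot v$ generated by such product sets. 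Your third paragraph essentially sketches the paper's argument as a concrete realization of the limit step, so the two presentations ultimately converge.
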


\begin{proof}
We will show that the ultralimit
$$
\xi_{uv}(\gamma) = \wtl\gamma(uv) = \lim_{x \to uv} \gamma(x)
$$
equals the product $\xi_u(\gamma)\,\xi_v(\gamma) = \wtl\gamma(u)\,\wtl\gamma(v)$,
for any $u,v \in \btG$ and $\gamma \in \dG$. Let $0 < \eps < \pi/2$. There are sets
$A \in u$, $B \in v$ such that $\lv\arg(\wtl\gamma(u)/\gamma(a)\rv < \eps$ for each
$a \in A$, as well as $\lv\arg(\wtl\gamma(v)/\gamma(b)\rv < \eps$ for each $b \in B$.
Then the set $C = AB$ obviously belongs to $uv$, and for any $c \in C$ we can find
$a \in A$, $b \in B$ such that $c = ab$. Then $\gamma(c) = \gamma(a)\,\gamma(b)$ and
\begin{align*}
\lv\arg\frac{\gamma(c)}{\xi_u(\gamma)\,\xi_v(\gamma)}\rv
&= \lv\arg\left(\frac{\gamma(a)}{\wtl\gamma(u)} \cd
    \frac{\gamma(b)}{\wtl\gamma(v)}\right)\rv \\
&\le \lv\arg\frac{\gamma(a)}{\wtl\gamma(u)}\rv +
      \lv\arg\frac{\gamma(b)}{\wtl\gamma(v)}\rv < 2\eps,
\end{align*}
showing that $\xi_{uv} = \xi_u\,\xi_v$.
\end{proof}

\begin{rem}
A quick inspection of the proof shows that we have proved a bit more. Namely,
$$
\xi_u(\gamma)\,\xi_v(\gamma) = \wtl\gamma(u)\,\wtl\gamma(v)
   = (u \odot v)\text{-}\!\lim_{x \in G} \gamma(x)\,,
$$
where $u \odot v$ denotes the filter on $\btG$ generated by all the sets of
the form $C = AB$, for $A \in u$, $B \in v$. As it is clear that
$u \odot v = v \odot u \sbs uv \cap vu$, we have
$$
\xi_{uv}(\gamma) = (u \odot v)\text{-}\!\lim_{x \in G} \gamma(x)
                 = \xi_{vu}(\gamma)\,.
$$
This has the neat consequence that, in spite of that the ultrafilters $uv$ and
$vu$ may differ, they still determine the same character of \,$\dGd$ (which, of
course, follows directly from the commutativity of the group $\fbG$ and homomorphy
of the canonical map $\btG \to \fbG$, as well).
\end{rem}

We will describe the closed congruence relation
$$
\Eq(\xi) = \{(u,v) \in \btG \cx \btG \mid \xi_u = \xi_v\}
= \bigcap_{\gamma\in\dG} \Eq(\wtl\gamma)
$$
on $\btG$, induced by the continuous surjective homomorphism $\xi$.

\begin{prop}\label{EqXi}
Let $G$ be an abelian group. Then
$$
\Eq(\xi) = \vXi(G).
$$
\end{prop}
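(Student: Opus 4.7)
The plan is to prove the two inclusions $\vXi(G) \sbs \Eq(\xi)$ and $\Eq(\xi) \sbs \vXi(G)$ separately, with rather different flavors.

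For $\vXi(G) \sbs \Eq(\xi)$, observe that $\Eq(\xi)$ is a closed congruence relation on $\btG$ (using continuity of $\xi$ and Proposition~\ref{HomCanMap}). By the minimality property defining $\vXi(G)$, it is enough to verify that $(u,1) \in \Eq(\xi)$, i.e.\ that $\wtl\gamma(u) = 1$, whenever $u \in \btG$ is a Schur ultrafilter and $\gamma \in \dG$. This is the technical heart of the proof. Writing $z = \wtl\gamma(u)$ and fixing $0 < \eps < \pi/2$, the set $A = \bigl\{x \in G \mid \lv\arg(\gamma(x)/z)\rv < \eps\bigr\}$ belongs to $u$; applying the Schur property to $A$ yields $a,b \in A$ with $ab \in A$. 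Since $\gamma(ab) = \gamma(a)\gamma(b)$, combining $\lv\arg(\gamma(a)/z)\rv < \eps$, $\lv\arg(\gamma(b)/z)\rv < \eps$ and $\lv\arg(\gamma(ab)/z)\rv < \eps$ via the subadditivity of the arc metric forces $\lv\arg z\rv < 3\eps$. Letting $\eps \to 0$ gives $z = 1$.

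For $\Eq(\xi) \sbs \vXi(G)$, the first inclusion produces a factorization $\xi = \bar\xi \co \pi$, where $\pi\:\btG \to \btG/\vXi(G)$ is the canonical projection and $\bar\xi\:\btG/\vXi(G) \to \fbG$ is a continuous homomorphism; the target inclusion is equivalent to $\bar\xi$ being injective. I would obtain injectivity by producing a continuous left inverse, i.e.\ a continuous homomorphism $\psi\:\fbG \to \btG/\vXi(G)$ with $\psi \co \xi = \pi$. By Corollary~\ref{Abel}, $\btG/\vXi(G)$ is a compact abelian topological group, so the composite $G \hookrightarrow \btG \xrightarrow{\,\pi\,} \btG/\vXi(G)$ is a group homomorphism from the (discrete, hence LCA) group $G$ into a compact topological group, and the universal property of the Bohr compactification supplies the desired $\psi\:\fbG \to \btG/\vXi(G)$. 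The maps $\psi \co \xi$ and $\pi$ are then both continuous $\btG \to \btG/\vXi(G)$ and agree on the dense subset $G$, hence coincide on $\btG$. Therefore $\xi(u) = \xi(v)$ implies $\pi(u) = \pi(v)$, i.e.\ $(u,v) \in \vXi(G)$.

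The main obstacle is really the Schur calculation in the first inclusion; the rest is formal. The second inclusion, by contrast, hinges on the preliminary fact that $\btG/\vXi(G)$ is a \emph{topological} group and not only a right topological semigroup, since the universal property of $\fbG$ only applies to compact topological group targets. This is precisely why the paper went through Theorem~\ref{SchurQuotient} (via Protasov's Lemma and Proposition~\ref{EllisTopGrp1}) and Corollary~\ref{Abel} before arriving at this proposition.
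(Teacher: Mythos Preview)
Your proof is correct and follows essentially the same route as the paper: the Schur/arc-metric computation for $\vXi(G)\sbs\Eq(\xi)$ is identical, and for the reverse inclusion both you and the paper invoke the universal property of $\fbG$ against the compact topological group $\btG/\vXi(G)$ from Theorem~\ref{SchurQuotient} (you simply spell out the factorization and the density argument that the paper leaves implicit). One cosmetic point: take $\eps<\pi/3$ rather than $\pi/2$ so that the three-term subadditivity of $\lv\arg\rv$ is clean, though since you let $\eps\to 0$ this does not affect correctness.
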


\begin{proof}
First we show that $\xi_u(\gamma) = \wtl\gamma(u)= 1$ for every Schur ultrafilter
$u \in \btG$ and any $\gamma \in \dG$; then $\xi_u$ is the unit element in $\fbG$
and the inclusion $\vXi(G) \sbs \Eq(\xi)$ follows immediately. So let $u \in \btG$ be
Schur and $\gamma \in \dG$. Let $0 < \eps < \pi/3$. Then there is an $A \in u$ such
that
$$
\lv\arg\frac{\wtl\gamma(u)}{\gamma(c)}\rv <\eps
$$
for each $c \in A$. Let $a,b \in A$ be such that $ab \in A$, as well. Then
\begin{align*}
\lv\arg\xi_u(\gamma)\rv
&= \lv\arg\left(\frac{\wtl\gamma(u)}{\gamma(a)}
   \cd \frac{\wtl\gamma(u)}{\gamma(b)}
   \cd \frac{\gamma(ab)}{\wtl\gamma(u)}\right)\rv \\
&\le \lv\arg\frac{\wtl\gamma(u)}{\gamma(a)}\rv
   + \lv\arg\frac{\wtl\gamma(u)}{\gamma(b)}\rv
   + \lv\arg\frac{\gamma(ab)}{\wtl\gamma(u)}\rv < 3\eps.
\end{align*}
Hence $\lv\arg\xi_u(\gamma)\rv = 0$, i.e., $\xi_u(\gamma) = \wtl\gamma(u) = 1$.

Since $\fbG$ is a universal compactification of the group $G$, the reversed 
inclusion $\Eq(\xi) \sbs \vXi(G)$ follows from the fact that the quotient
$\btG/\vXi(G)$ is a compact topological group, established in Theorem
\ref{SchurQuotient}.
\end{proof}

As a consequence of Propositions~\ref{HomCanMap} and \ref{EqXi}, the mapping
$\btG/\vXi(G) \to \fbG$, induced by the canonical mapping $\xi\:\btG \to \fbG$,
is a bijective continuous homomorphism of topological groups. Since $\btG$ is
compact, it is a homeomorphism, too (see, e.g., Engelking~\cite{Eng}, Theorem~3.1.13).
That way we finally obtain

\begin{thm}\label{StoneCechQuotBohr}
Let $G$ be an abelian group and $\vXi(G)$ be the least closed congruence relation
on $\btG$ merging all the Schur ultrafilters $u \in \btG$ into the unit of \,$G$.
Then the mapping $\btG/\vXi(G) \to \fbG$, induced by  the canonical mapping
$\xi\:\btG \to \fbG$ given by $\xi_u(\gamma) = \wtl\gamma(u)$ for $u \in \btG$,
$\gamma \in \dG$, is an isomorphism of topological groups.
\end{thm}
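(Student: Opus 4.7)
The plan is to assemble the theorem directly from the pieces already established. First I would invoke Proposition~\ref{HomCanMap} to know that $\xi\:\btG \to \fbG$ is a continuous homomorphism of semigroups, and Proposition~\ref{EqXi} to identify the kernel congruence $\Eq(\xi)$ with $\vXi(G)$. By the universal property of the quotient topology applied to the closed congruence $\vXi(G)$, this yields a well-defined, injective, continuous homomorphism $\bar\xi\:\btG/\vXi(G) \to \fbG$ of groups (using Theorem~\ref{SchurQuotient} and Corollary~\ref{Abel} to see the domain is a compact abelian topological group).

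Next I would establish surjectivity. The image $\bar\xi[\btG/\vXi(G)] = \xi[\btG]$ contains $\xi[G]$, which is exactly the canonical dense image $\ddG \cong G$ of $G$ inside $\fbG$, because $\xi(x) = \hat x$ for $x \in G$ by the definition of the canonical map. Since $\btG$ is compact, its continuous image $\xi[\btG]$ is compact, hence closed in the Hausdorff space $\fbG$. A closed subset containing a dense subset must be all of $\fbG$, so $\bar\xi$ is surjective.

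Finally I would upgrade the continuous bijection $\bar\xi$ to a homeomorphism. Since $\btG/\vXi(G)$ is compact and $\fbG$ is Hausdorff, the standard fact (cited in the paragraph just before the theorem, cf.\ Engelking~\cite{Eng}, Theorem~3.1.13) that a continuous bijection from a compact space to a Hausdorff space is automatically a homeomorphism gives us that $\bar\xi$ is a topological isomorphism. Combined with the fact that it is already a group homomorphism, this yields the desired isomorphism of topological groups.

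There is essentially no obstacle left, since all the substantive work (the homomorphism property of $\xi$, the identification $\Eq(\xi) = \vXi(G)$, and the fact that $\btG/\vXi(G)$ is a compact topological group) has already been done in Propositions~\ref{HomCanMap}, \ref{EqXi}, and Theorem~\ref{SchurQuotient}; the proof of the theorem is just a two-line packaging of these results together with the compact-to-Hausdorff bijection principle.
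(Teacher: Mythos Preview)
Your proposal is correct and follows essentially the same route as the paper: assemble Propositions~\ref{HomCanMap} and~\ref{EqXi} to obtain a bijective continuous homomorphism $\btG/\vXi(G) \to \fbG$, and then invoke the compact-to-Hausdorff principle to upgrade it to a homeomorphism. The only difference is that you spell out the surjectivity argument explicitly, whereas the paper has already recorded earlier that $\xi$ is surjective (since $G$ is dense in $\fbG$) and simply cites this fact.
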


\end{document}